\newcommand{\bburl}[1]{\textcolor{blue}{\url{#1}}}
\newcommand{\seqnum}[1]{\href{https://oeis.org/#1}{\rm \underline{#1}}}
\newtheorem{thm}{Theorem}[section]
\newtheorem{cor}[thm]{Corollary}
\newtheorem{prop}[thm]{Proposition}
\newtheorem{exa}[thm]{Example}
\newtheorem{defi}[thm]{Definition}
\newtheorem{rek}[thm]{Remark}
\numberwithin{equation}{section}
\DeclareFontFamily{U}{mathx}{}
\DeclareFontShape{U}{mathx}{m}{n}{<-> mathx10}{}
\DeclareSymbolFont{mathx}{U}{mathx}{m}{n}
\DeclareMathAccent{\widehat}{0}{mathx}{"70}
\DeclareMathAccent{\widecheck}{0}{mathx}{"71}
\begin{document}

\title{Approximation by Egyptian fractions and the weak greedy algorithm}

\author{H\`ung Vi\d{\^e}t Chu}

\email{\textcolor{blue}{\href{mailto:hungchu2@illinois.edu}{hungchu2@illinois.edu}}}
\address{Department of Mathematics, University of Illinois at Urbana-Champaign, Urbana, IL 61820, USA}

\begin{abstract} 
Let $0 < \theta \leqslant 1$. A sequence of positive integers $(b_n)_{n=1}^\infty$ is called a weak greedy approximation of $\theta$ if $\sum_{n=1}^{\infty}1/b_n = \theta$. We introduce the weak greedy approximation algorithm (WGAA), which, for each $\theta$, produces two sequences of positive integers $(a_n)$ and $(b_n)$ such that 
\begin{enumerate}
\item[a)] $\sum_{n=1}^\infty 1/b_n = \theta$;
\item[b)] $1/a_{n+1} < \theta - \sum_{i=1}^{n}1/b_i < 1/(a_{n+1}-1)$ for all $n\geqslant 1$;
\item[c)] there exists $t\geqslant 1$ such that $b_n/a_n \leqslant t$ infinitely often.
\end{enumerate}
We then investigate when a given weak greedy approximation $(b_n)$ can be produced by the WGAA. Furthermore, we show that for any non-decreasing $(a_n)$ with $a_1\geqslant 2$ and $a_n\rightarrow\infty$, there exist $\theta$ and $(b_n)$ such that a) and b) are satisfied; whether c) is also satisfied depends on the sequence $(a_n)$. Finally, we address the uniqueness of $\theta$ and $(b_n)$ and apply our framework to specific sequences. 
\end{abstract}

\subjclass[2020]{11A67, 11B99}

\keywords{Egyptian fraction; greedy algorithm; sequences}

\thanks{The author would like to thank the anonymous referees for a careful reading of the paper and helpful comments.}

\maketitle

\section{Introduction}

Throughout this paper, let $\theta$ denote a number in $(0,1]$ and let $G: (0,1]\rightarrow \mathbb{N}_{\geqslant 2}$ be the function 
$$G(\theta)\ =\ \left\lfloor \frac{1}{\theta}\right\rfloor +1;$$
that is, $G(\theta)$ gives the unique positive integer $a\geqslant 2$ such that 
$$\frac{1}{a}\ <\ \theta \ \leqslant\ \frac{1}{a-1}.$$

An \textit{Egyptian} fraction is a fraction of the form $1/n$ for some positive integer $n$.
We consider the problem of representing $\theta$ as an infinite sum of Egyptian fractions. One natural method is the \textit{greedy underapproximation algorithm} (GUA), which constructs a sequence of positive integers $(a_n)_{n=1}^\infty$ recursively as follows: $a_1 = G(\theta)\geqslant 2$; supposing that $a_1, \ldots, a_n$ have been constructed, let 
$$a_{n+1}\ =\ G\left(\theta-\sum_{i=1}^n \frac{1}{a_i}\right).$$
By \cite[(3)]{Na23}, the sequence $(a_n)$ is strictly increasing and particularly, satisfies 
\begin{equation}\label{e1}a_1\geqslant 2\mbox{ and } a_{n+1}\ \geqslant\ a_n^2-a_n+1.\end{equation}
Since by construction,
$$\theta-\sum_{i=1}^n\frac{1}{a_n} \ \leqslant\ \frac{1}{a_{n+1}-1}\ \rightarrow\ 0,$$
we have
$$\sum_{n=1}^\infty \frac{1}{a_n} = \theta.$$
According to \cite[Theorem 5]{Na23}, if $\theta = p/q$, where $p, q$ are positive integers such that $p$ divides $q+1$, then the GUA produces the best approximations; i.e., the $n$-term approximation $\sum_{i=1}^n1/a_i$ outperforms any other $n$-term underapproximations using Egyptian fractions. This generalizes a result in \cite{Cu22, So05, Ta21}. The proof involves an useful inequality established in \cite{AB15} (see also \cite{Na22}.) However, such optimality does not hold for general $\theta$ (see \cite[Section 5]{Na23}.)

The goal of this paper is to investigate a weak version of the GUA, which is inspired by the so-called \textit{(weak) thresholding greedy algorithm} (TGA) in the area of functional analysis. We describe the (weak) TGA briefly. Let $X$ be an infinite-dimensional, complete, normed vector space. Assume further that $X$ has a basis $\mathcal{B} = (e_n)_{n=1}^\infty$ so that every vector $x\in X$ can be represented by a formal series $\sum_{n=1}^\infty a_n e_n$, where $a_n$ are scalars. (The series converges to a vector when our basis is Schauder; however, for general Markushevich bases, the series may only be formal.)  In order to form an $m$-term approximation of $x$, the TGA chooses $m$ largest coefficients $a_n$ in modulus. Formally, let $A\subset\mathbb{N}$ verify $|A| = m$ and 
\begin{equation}\label{e2}\min_{n\in A}|a_n| \ \geqslant\ \max_{n\notin A}|a_n|.\end{equation} Then the TGA produces the $m$-term approximation $\sum_{n\in A}a_n e_n$. It is not always true that approximations produced by this method converge to the original vector $x$ as $m$ grows. In fact, Konyagin and Temlyakov \cite{KT99} called a basis \textit{quasi-greedy} if these approximations converge to the desired $x$. Meanwhile, Temlyakov \cite{Te98} introduced a weaker version of the TGA, called the weak TGA (WTGA), which is more flexible in forming approximating sums. In particular, fixing a number $t\in (0,1]$, the WTGA considers sets $A$ satisfying $|A| = m$ and 
\begin{equation}\label{e3}\min_{n\in A}|a_n|\ \geqslant\ t\max_{n\notin A}|a_n|.\end{equation}
Clearly, \eqref{e3} is weaker than \eqref{e2}. In other words, the WTGA chooses the ``largest coefficients up to a constant." Surprisingly, the flexibility of the WTGA does not affect convergence: a basis is quasi-greedy under the TGA if and only if it is quasi-greedy under the WTGA (see \cite[Section 1.5]{Te08}.)

Inspired by the aforementioned interactions between the TGA and the WTGA, we introduce the \textit{weak greedy approximation algorithm} (WGAA) as a companion of the GUA. The idea is that at the $n$th step of our weak algorithm, we pick $a_n$ based on the ``greedy choice up to a constant". 
Specifically, fix $t\in \mathbb{R}_{\geqslant 1}$ and an infinite set $\Lambda\subset \mathbb{N}$.
For each $\theta\in (0,1]$, we define the $(t, \Lambda)$-WGAA as follows: let $a_1=G(\theta)$. Choose $b_1\geqslant a_1$. Additionally, we require $b_1\leqslant ta_1$ if $1\in \Lambda$. Assuming that $a_1, b_1, \ldots, a_n, b_n$ have been defined, we let
\begin{equation}\label{e4}a_{n+1} \ =\ G\left(\theta - \sum_{i=1}^{n}\frac{1}{b_i}\right).\end{equation}
Choose $b_{n+1}\geqslant a_{n+1}$. Additionally, we require $b_{n+1}\leqslant ta_{n+1}$ if $n+1\in \Lambda$. We see that the $(t, \Lambda)$-WGAA generalizes the GUA by simply setting $t = 1$ and $\Lambda = \mathbb{N}$. 

\begin{defi}\normalfont
An infinite sequence of positive integers $(b_n)_{n=1}^\infty$ is called a \textit{weak greedy approximation} of $\theta$ if 
$\sum_{n=1}^\infty 1/b_n = \theta$ and for all $n\geqslant 1$, 
\begin{equation}\label{e6}G\left(\theta-\sum_{i=1}^{n-1} \frac{1}{b_i}\right) \ \leqslant\ b_n.\end{equation}
\end{defi}
Inequality \eqref{e6} indicates that a term $b_n$ is not necessarily picked by the greedy algorithm. Attentive readers may notice that  \eqref{e6} is superfluous. Indeed, suppose that for some $N$, 
$$b_N \ <\ G\left(\theta-\sum_{i=1}^{N-1} \frac{1}{b_i}\right)\ =:\ a_N.$$
Then 
$$\sum_{i=1}^N \frac{1}{b_i}\ =\ \sum_{i=1}^{N-1}\frac{1}{b_i} + \frac{1}{b_N}\ \geqslant\ \sum_{i=1}^{N-1}\frac{1}{b_i} + \frac{1}{a_N-1}\ \geqslant \ \theta,$$
which contradicts $\sum_{n=1}^\infty 1/b_n = \theta$.

We describe the paper's structure. In Section \ref{conv}, we show that the WGAA satisfies the minimal requirement for an algorithm to be sensible; that is, for every $\theta$, the sequence $(b_n)$ produced by the WGAA satisfies 
\begin{equation}\label{e5}\sum_{n=1}^\infty \frac{1}{b_n}  \ =\ \theta.\end{equation}
This is the analog of the relation between the TGA and the WTGA. 
Moreover, we compute the growth rate of the sequence $(b_n)$ produced by the $(t, \Lambda)$-WGAA when $\Lambda = \mathbb{N}$ and $b_n = \lceil ta_n\rceil$ (Proposition \ref{p1}.)

In Section \ref{gen}, we carry out a deeper study of the two sequences $(a_n)$ and $(b_n)$ produced by the WGAA. According to Section \ref{conv}, if $(b_n)$ is produced by the WGAA applied to $\theta$, then $(b_n)$ is a weak greedy approximation of $\theta$. We shall show that the converse is not true: there exist $\theta$ and $(b_n)$ such that $\sum_{n=1}^\infty 1/b_n = \theta$, but $(b_n)$ cannot be produced by the WGAA.
To do so, we observe that $\theta, (a_n), (b_n)$ produced by the WGAA have three properties
\begin{enumerate}
\item[a)] $\sum_{n=1}^\infty 1/b_n = \theta$ (see Section \ref{conv});
\item[b)] \eqref{e4} holds for all $n$; 
\item[c)] there exists $t\geqslant 1$ such that $b_n/a_n \leqslant t$ infinitely often.
\end{enumerate}
As we shall see, condition c) guarantees the convergence \eqref{e5}. However, even when $\theta$, $(a_n)$, and $(b_n)$ verify a) and b), they do not necessarily satisfy c). As a result, in such cases, $(b_n)$ cannot be produced by the WGAA. We then go further to characterize the situation when c) does not hold (see Proposition \ref{p2}.) 

Next, we consider the following question: given a non-decreasing sequence $(a_n)$ with $a_1\geqslant 2$ and $a_n\rightarrow\infty$, are there $\theta\in (0,1]$ and $(b_n)$ such that a) and b) hold? According to \cite[Corollary 3]{Na23}, the answer is positive if $a_{n+1}\geqslant a_n^2-a_n+1$, in which case, $\theta = \sum_{n=1}^\infty 1/a_n$ and $b_n = a_n$ for all $n\geqslant 1$. By explicit construction, we answer the aforementioned question in the affirmative for any non-decreasing sequence $(a_n)$ with $a_1\geqslant 2$ and $a_n\rightarrow\infty$ (see Theorem \ref{m1} and its Corollary \ref{c2}.)

Section \ref{unique} gives necessary and sufficient conditions for when a sequence $(a_n)$ gives unique $\theta$ and $(b_n)$ (Corollary \ref{c3} and Proposition \ref{p4}.) Finally, Section \ref{seq} applies the framework from previous sections to particular sequences including geometric progressions, arithmetic progressions, and the Fibonacci sequence.

\section{Convergence of the WGAA}\label{conv}
The minimal requirement we want the WGAA to satisfy is convergence, which is confirmed by the following proposition.

\begin{prop}\label{p5}
If $(b_n)_{n=1}^\infty$ is obtained from the $(t, \Lambda)$-WGAA applied to $\theta$, then 
$$\sum_{n=1}^\infty \frac{1}{b_n} \ =\ \theta.$$
\end{prop}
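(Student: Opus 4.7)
The plan is to track the sequence of remainders $r_n := \theta - \sum_{i=1}^n 1/b_i$ and prove $r_n \to 0$, which is equivalent to the desired equality. The strategy has two parts: first, show that $(r_n)$ is a strictly decreasing sequence of positive reals, so it converges to some $r\geqslant 0$; second, argue that $r=0$ by contradiction, using the weak greedy constraint along the infinite set $\Lambda$.

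For the first part, I would invoke \eqref{e4}, which says $a_{n+1}=G(r_n)$, together with the defining property of $G$, to conclude
$$\frac{1}{a_{n+1}}\;<\;r_n\;\leqslant\;\frac{1}{a_{n+1}-1}.$$
Since the WGAA enforces $b_{n+1}\geqslant a_{n+1}$, one gets $1/b_{n+1}\leqslant 1/a_{n+1}<r_n$, so $r_{n+1}=r_n-1/b_{n+1}>0$. Hence $(r_n)$ is strictly decreasing and bounded below by $0$, and converges to some $r\geqslant 0$; equivalently, $\sum_{i=1}^\infty 1/b_i$ converges to $\theta - r$. In particular, $1/b_n\to 0$, so $b_n\to\infty$.

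For the second part, suppose toward contradiction that $r>0$. From $r_n\leqslant 1/(a_{n+1}-1)$ and $r_n\geqslant r$, it follows that $a_{n+1}\leqslant 1+1/r$, so the whole sequence $(a_n)$ is uniformly bounded. Now I use the weak greedy constraint on $\Lambda$: for every $n\in\Lambda$ one has $b_n\leqslant ta_n$, so the subsequence $(b_n)_{n\in\Lambda}$ is uniformly bounded. Since $\Lambda$ is infinite, this contradicts $b_n\to\infty$. Therefore $r=0$ and $\sum_{n=1}^\infty 1/b_n=\theta$.

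I do not anticipate a substantial obstacle; the argument is short and almost forced. The key conceptual point, rather than any technical difficulty, is to recognize that the hypothesis ``$\Lambda$ is infinite'' together with the slack parameter $t$ is exactly what keeps $(b_n)_{n\in\Lambda}$ tethered to $(a_n)$, and hence what prevents $(b_n)$ from growing too fast to reach $\theta$. This is also what foreshadows the later discussion of condition c), where a finite $\Lambda$ (i.e., eventual failure of $b_n/a_n\leqslant t$) can genuinely cause convergence to break down.
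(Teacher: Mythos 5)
Your proof is correct and takes essentially the same approach as the paper: both argue that if the remainders did not tend to zero, then $(a_n)$ would be bounded, and the weak greedy constraint on the infinite set $\Lambda$ would force infinitely many $b_n$ to be bounded, which is incompatible with the convergence of $\sum 1/b_n$. The only cosmetic difference is that you assume $r>0$ and deduce $b_n\to\infty$ to get the contradiction, whereas the paper assumes $(a_n)$ bounded and notes the series would diverge; these are the same contradiction read in two directions.
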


\begin{proof}
Let $(a_n), (b_n)$ be the two sequences produced by the $(t, \Lambda)$-WGAA applied to $\theta$: 
for each $n\geqslant 1$, 
\begin{equation}\label{e7}a_n \ =\ G\left(\theta - \sum_{i=1}^{n-1}\frac{1}{b_i}\right); \mbox{ equivalently, }0 \ <\ \frac{1}{a_n}\ <\ \theta - \sum_{i=1}^{n-1}\frac{1}{b_i}\ \leqslant\ \frac{1}{a_n-1}.\end{equation}
Hence, $(a_n)$ is non-decreasing. It suffices to prove that $(a_n)$ is unbounded. Suppose otherwise that there is some $M$ such that $a_n\leqslant M$ for all $n$. Then $b_n\leqslant Mt$ infinitely often, which implies that $\sum_{n=1}^\infty 1/b_n = \infty$, contradicting \eqref{e7}.
\end{proof}

Next, we consider a special case of the general $(t, \Lambda)$-WGAA by requiring that $\Lambda = \mathbb{N}$ and for all $n$, $b_n = \lceil ta_n\rceil$. Let us denote this algorithm by $\mathcal{G}(t)$. 
Suppose that we use $\mathcal{G}(t)$ to obtain an $n$-term approximation $\sum_{i=1}^n 1/c_i$ of $\theta$. Then a logical choice is to have $c_i = b_i = \lceil ta_i\rceil$ for all $1\leqslant i\leqslant n-1$, while $c_n = a_n$. (It makes no sense if we do not choose the last term $c_n$ greedily.) An approximation by $\mathcal{G}(4/3)$ may outperform the GUA. We borrow an example from \cite{Na23}. The GUA gives $1/3 + 1/17$ as a $2$-term underapproximation of $19/48$, while $\mathcal{G}(4/3)$ gives $1/4+1/7$. We have
$$\frac{1}{3} + \frac{1}{17}\ < \ \frac{1}{4} + \frac{1}{7} \ < \ \frac{19}{48}.$$

By definition, $\mathcal{G}(1)$ is the greedy underapproximation algorithm. There is an interesting difference between $t = 1$ and $t > 1$. If $(b_n)$ is obtained by $\mathcal{G}(1)$ applied to $\theta$, then \cite[(3)]{Na23} gives $$\frac{b_{n+1}}{b_n}\ \geqslant\  b_n-1+\frac{1}{b_n}.$$
Since $\lim_{n\rightarrow\infty}b_n = \infty$, we get $\lim_{n\rightarrow\infty} b_{n+1}/b_n = \infty$. However, the limit is finite when $t > 1$ as the following proposition shows. 

\begin{prop}\label{p1}
If $(b_n)_{n=1}^\infty$ is the sequence from $\mathcal{G}(t)$ applied to $\theta$, then 
$$\lim_{n\rightarrow\infty} \frac{b_{n+1}}{b_n} \ =\ \begin{cases}t/(t-1)&\mbox{ if }t > 1,\\ \infty &\mbox{ if 
 }t = 1.\end{cases}$$
\end{prop}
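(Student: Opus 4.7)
The plan is to handle the case $t=1$ instantly and focus on $t>1$. For $t=1$ the algorithm $\mathcal{G}(1)$ is the GUA, so the cited bound $b_{n+1}/b_n \geqslant b_n-1+1/b_n$ together with the fact that $b_n\to\infty$ (established in Proposition \ref{p5}) immediately gives divergence.

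For $t>1$, the strategy is to pin down $a_{n+1}$ to within $O(1)$ of a linear function of $a_n$, using only the bracketing relation \eqref{e7}. Substituting $b_n = \lceil ta_n\rceil$ and subtracting $1/b_n$ from the bracketing of $\theta-\sum_{i=1}^{n-1}1/b_i$ in \eqref{e7}, I would obtain
$$\frac{1}{a_n}-\frac{1}{b_n} \ <\ \theta - \sum_{i=1}^{n}\frac{1}{b_i} \ \leqslant\ \frac{1}{a_n-1}-\frac{1}{b_n}.$$
Comparing this with the bracketing of $\theta-\sum_{i=1}^n 1/b_i$ by $1/a_{n+1}$ and $1/(a_{n+1}-1)$ coming from \eqref{e7} applied at the next index, inversion yields
$$\frac{(a_n-1)b_n}{b_n-a_n+1} \ \leqslant\ \frac{1}{\theta-\sum_{i=1}^n 1/b_i} \ <\ \frac{a_n b_n}{b_n-a_n}.$$

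The next step is asymptotic simplification. Writing $b_n = ta_n + \delta_n$ with $\delta_n\in[0,1)$, so that $b_n-a_n = (t-1)a_n+\delta_n$, the right-hand side becomes
$$\frac{a_n b_n}{b_n-a_n} \ =\ \frac{ta_n+\delta_n}{(t-1)+\delta_n/a_n} \ =\ \frac{t}{t-1}a_n + O(1),$$
and an analogous expansion shows the left-hand side equals $\tfrac{t}{t-1}a_n+O(1)$ as well. Consequently $a_{n+1} = \tfrac{t}{t-1}a_n + O(1)$, and since $a_n\to\infty$ (Proposition \ref{p5}), dividing by $a_n$ yields $a_{n+1}/a_n\to t/(t-1)$. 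Finally,
$$\frac{b_{n+1}}{b_n} \ =\ \frac{\lceil ta_{n+1}\rceil}{\lceil ta_n\rceil} \ =\ \frac{ta_{n+1}+O(1)}{ta_n+O(1)}$$
has the same limit $t/(t-1)$, since both numerator and denominator grow without bound.

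The main obstacle is not conceptual but bookkeeping: one must check that the multiple $O(1)$ corrections arising from the ceiling in $b_n = \lceil ta_n\rceil$ and from the two endpoints $a_n$ and $a_n-1$ in \eqref{e7} remain bounded after inversion, so that they are absorbed cleanly when divided by $a_n\to\infty$. Once this is verified, the limit follows from a single pass of the comparison above.
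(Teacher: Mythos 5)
Your proof is correct and follows essentially the same path as the paper: you re-derive the paper's key two-sided inequality \eqref{e8} directly from \eqref{e7}, substitute $b_n=\lceil ta_n\rceil$, and conclude $a_{n+1}/a_n\to t/(t-1)$ and hence $b_{n+1}/b_n\to t/(t-1)$. The only cosmetic difference is that you package the error terms as $O(1)$ corrections after inversion, whereas the paper records two explicit inequalities (its \eqref{e9} and \eqref{e10}) and lets $n\to\infty$; both routes hinge on the same estimate $\frac{t}{t-1}+o(1) < a_{n+1}/a_n < \frac{t}{t-1}+\frac{1}{a_n}$ together with $a_n\to\infty$ from Proposition \ref{p5}.
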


Before proving Proposition \ref{p1}, we record an important inequality addressing the relation between $(a_n)$ and $(b_n)$ produced by the WGAA. For each $n\geqslant 1$, we have
\begin{align*}
\frac{1}{a_{n+1}}\ <\ \theta - \sum_{i=1}^n\frac{1}{b_i}\ =\ \left(\theta - \sum_{i=1}^{n-1}\frac{1}{b_i}\right) - \frac{1}{b_n}\ \leqslant\ \frac{1}{a_n-1}-\frac{1}{b_n},
\end{align*}
and
\begin{align*}
\frac{1}{a_{n+1}-1}\ \geqslant\ \theta - \sum_{i=1}^n \frac{1}{b_i}\ =\ \left(\theta - \sum_{i=1}^{n-1}\frac{1}{b_i}\right) - \frac{1}{b_n}\ >\ \frac{1}{a_n}- \frac{1}{b_n}.
\end{align*}
Hence,
\begin{equation}\label{e8}\frac{1}{a_n} - \frac{1}{a_{n+1}-1}\ <\ \frac{1}{b_n}\ <\ \frac{1}{a_n-1}-\frac{1}{a_{n+1}},\forall n\in\mathbb{N}.\end{equation}

\begin{proof}[Proof of Proposition \ref{p1}]
The case $t=1$ is explained right before Proposition \ref{p1}. Let $t > 1$. The right side of \eqref{e8} yields
$$
\frac{1}{a_{n+1}}\ <\ \frac{1}{a_n-1}-\frac{1}{b_n}\ =\ \frac{1}{a_n-1}-\frac{1}{\lceil ta_n\rceil}\ <\ \frac{1}{a_n-1}-\frac{1}{ta_n+1},\forall n\in \mathbb{N}.
$$
Therefore,
\begin{equation}\label{e9}\frac{1}{a_{n+1}}\ <\ \frac{(t-1)a_n+2}{(ta_n+1)(a_n-1)}\ \Longrightarrow\ \frac{a_{n+1}}{a_n}\ >\ \frac{\left(t+\frac{1}{a_n}\right)\left(1-\frac{1}{a_n}\right)}{t-1+\frac{2}{a_n}}.\end{equation}
The left side of \eqref{e8} yields
$$
\frac{1}{a_{n+1}-1} \ >\ \frac{1}{a_n}-\frac{1}{b_n}\ =\ \frac{1}{a_n} - \frac{1}{\lceil ta_n\rceil}\ \geqslant\ \frac{1}{a_n}-\frac{1}{ta_n}.
$$
Hence,
\begin{equation}\label{e10}
\frac{a_{n+1}}{a_n}\ <\ \frac{t}{t-1}+\frac{1}{a_n}.
\end{equation}
From \eqref{e9} and \eqref{e10}, we obtain that $\lim_{n\rightarrow\infty}a_{n+1}/a_n = t/(t-1)$. Since $b_n = \lceil ta_n\rceil$, we have the desired conclusion.
\end{proof}

\section{The range of the WGAA}\label{gen}
In this section, we address the question of whether every weak greedy approximation can be obtained from the WGAA.
The boundedness condition on the WGAA requires that for some $t\geqslant 1$, $b_n/a_n\leqslant t$ infinitely often, which  guarantees the convergence of $\sum_{n=1}^\infty 1/b_n$ to the desired $\theta$ (see the proof of Proposition \ref{p5}.) However, there exist $\theta$ and $(b_n)$ such that if $(a_n)$ satisfies \eqref{e4}, then 
$\lim_{n\rightarrow\infty}b_n/a_n = \infty$. By studying such a situation, we know more about the sequence $(a_n)$ (see Corollary \ref{c1}.) First, consider the following example. 

\begin{exa}\normalfont\label{ex1}
For $n\in \mathbb{N}$, let $b_n = n(n+2)$ and $\theta = 3/4$. It is easy to check that $\sum_{n=1}^\infty 1/b_n = \theta$. We claim that if $(a_n)$ satisfies \eqref{e4}, then $a_n = n+1$.  Indeed, it suffices to show that
$$\left\lfloor \left(\frac{3}{4}-\sum_{i=1}^{n-1} \frac{1}{i(i+2)}\right)^{-1}\right\rfloor \ =\ n, \forall n\in\mathbb{N}.$$
We have
\begin{align*}\left\lfloor \left(\frac{3}{4}-\sum_{i=1}^{n-1} \frac{1}{i(i+2)}\right)^{-1}\right\rfloor&\ =\ \left\lfloor \left(\sum_{i=1}^{\infty}\frac{1}{i(i+2)}-\sum_{i=1}^{n-1} \frac{1}{i(i+2)}\right)^{-1}\right\rfloor\\
 &\ = \ \left\lfloor \left(\sum_{i = n}^{\infty}\frac{1}{i(i+2)}\right)^{-1}\right\rfloor\\
 &\ =\ \left\lfloor \left(\frac{1}{2}\left(\frac{1}{n} + \frac{1}{n+1}\right)\right)^{-1}\right\rfloor\mbox{ by telescoping}\\
 &\ =\ \left\lfloor n + \frac{n}{2n+1}\right\rfloor\ =\ n.
\end{align*}
Hence, $a_n = n+1$ and $b_n/a_n\rightarrow\infty$. 
\end{exa}

The sequences $(a_n)$ and $(b_n)$ in Example \ref{ex1} do not have $b_n/a_n$ infinitely often bounded. In other words, a weak greedy approximation does not necessarily come from the WGAA. The next proposition provides a characterization of this situation.

\begin{prop}\label{p2}
Let $(b_n)_{n=1}^\infty$ be a weak greedy approximation of $\theta$ and $(a_n)_{n=1}^\infty$ satisfy \eqref{e4}. The following are equivalent
\begin{enumerate}
\item[i)] for all $t\geqslant 1$, $\{n: b_n/a_n\leqslant t\}$ is finite.
\item[ii)] $\lim_{n\rightarrow\infty}a_{n+1}/a_n = 1$. 
\end{enumerate}
\end{prop}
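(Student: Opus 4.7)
The entire argument should hinge on the two-sided estimate \eqref{e8}, which ties $b_n$ to $(a_n, a_{n+1})$. Before invoking it, I would first record two basic facts about $(a_n)$. Since $\theta - \sum_{i=1}^{n-1}1/b_i$ is strictly decreasing in $n$ and $G$ is non-increasing, $(a_n)$ is non-decreasing; moreover $1/a_n < \theta - \sum_{i=1}^{n-1}1/b_i \to 0$ (the tail of a convergent series), so $a_n \to \infty$. Note also that condition (i) is simply the statement that the sequence $b_n/a_n$ diverges to $\infty$.

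For the direction (ii) $\Rightarrow$ (i), I would use only the right half of \eqref{e8}. Inverting gives
$$b_n \ >\ \frac{(a_n-1)\,a_{n+1}}{a_{n+1}-a_n+1},$$
hence
$$\frac{b_n}{a_n}\ >\ \frac{(1-1/a_n)(a_{n+1}/a_n)}{a_{n+1}/a_n-1+1/a_n}.$$
Under the hypothesis $a_{n+1}/a_n \to 1$ together with $a_n\to\infty$, the numerator on the right tends to $1$ while the denominator tends to $0^{+}$, so $b_n/a_n \to \infty$, which is (i).

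For the direction (i) $\Rightarrow$ (ii), I would use the left half of \eqref{e8}, which rearranges to
$$a_{n+1}-1\ <\ \frac{a_n b_n}{b_n-a_n},\quad\text{so}\quad \frac{a_{n+1}}{a_n}\ <\ \frac{1}{1-a_n/b_n}+\frac{1}{a_n}.$$
Under the hypothesis $b_n/a_n \to \infty$, the ratio $a_n/b_n \to 0$, and since $a_n\to\infty$ as well, the right side tends to $1$, giving $\limsup a_{n+1}/a_n \leqslant 1$. Because $(a_n)$ is non-decreasing, $a_{n+1}/a_n \geqslant 1$ for every $n$, and thus $\lim a_{n+1}/a_n = 1$, which is (ii).

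There is really no serious obstacle here: once \eqref{e8} is in hand, the proof reduces to solving each inequality for the ratio of interest and inserting the hypothesis. The one point that requires a moment's care is verifying at the outset that $a_n\to\infty$, since both limits degenerate to $0/0$ indeterminate forms if $(a_n)$ were bounded; this is why the tail of the series $\sum 1/b_n$ needs to be invoked before anything else.
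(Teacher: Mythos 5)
Your proof is correct and rests on exactly the same engine as the paper's: the two-sided estimate \eqref{e8}, with the left inequality driving (i) $\Rightarrow$ (ii) and the right inequality driving (ii) $\Rightarrow$ (i). The only real difference is presentational: the paper proves (ii) $\Rightarrow$ (i) by contrapositive (assume $b_n/a_n \leqslant t$ infinitely often and show $a_{n+1}/a_n$ stays bounded away from $1$ along that subsequence), whereas you argue directly that $a_{n+1}/a_n \to 1$ forces $b_n/a_n \to \infty$ by passing to limits in the inverted inequality; similarly, the paper's (i) $\Rightarrow$ (ii) direction is phrased with explicit $M$ and $\varepsilon$ while yours is phrased in limit language. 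Both reduce to the same algebraic manipulations of \eqref{e8}, so this is essentially the paper's proof.
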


\begin{cor}\label{c1}
Let $(b_n)_{n=1}^\infty$ be a weak greedy approximation of $\theta$ and $(a_n)_{n=1}^\infty$ satisfy \eqref{e4}.
Then $(a_n)_{n=1}^\infty$ and $(b_n)_{n=1}^\infty$ are obtained from the WGAA if and only if for some $\varepsilon > 0$, $a_{n+1} > (1+\varepsilon) a_n$ infinitely often. 
\end{cor}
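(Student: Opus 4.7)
The plan is to derive Corollary \ref{c1} as a direct consequence of Proposition \ref{p2} via contraposition. First I would unpack what ``obtained from the WGAA'' means given the hypotheses of the corollary: since $(b_n)$ is assumed to be a weak greedy approximation of $\theta$ (so condition a) holds and $b_n\geqslant a_n$ for every $n$) and $(a_n)$ satisfies \eqref{e4} (so condition b) holds), the pair $(a_n),(b_n)$ comes from some $(t,\Lambda)$-WGAA if and only if condition c) is satisfied, i.e., there is some $t\geqslant 1$ such that $b_n/a_n\leqslant t$ for infinitely many $n$. Indeed, if c) holds with constant $t$, then setting $\Lambda = \{n: b_n\leqslant ta_n\}$ (which is infinite) shows that the $(t,\Lambda)$-WGAA is free to pick the given $b_n$ at each step.

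Next I would translate c) into the statement about $(a_n)$. Condition c) is exactly the negation of item i) of Proposition \ref{p2}, so by Proposition \ref{p2} it is equivalent to the negation of item ii): $\lim_{n\to\infty} a_{n+1}/a_n\neq 1$ (either the limit fails to exist or it exceeds $1$).

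Finally I would convert ``$\lim a_{n+1}/a_n\neq 1$'' into the desired quantitative form. Since $(a_n)$ is non-decreasing (proved in Proposition \ref{p5}), one has $a_{n+1}/a_n\geqslant 1$ for all $n$, hence $\liminf a_{n+1}/a_n\geqslant 1$. Consequently $\lim a_{n+1}/a_n=1$ is equivalent to $\limsup a_{n+1}/a_n\leqslant 1$, which in turn is equivalent to: for every $\varepsilon>0$, the inequality $a_{n+1}/a_n>1+\varepsilon$ holds for only finitely many $n$. Negating this yields: there exists $\varepsilon>0$ such that $a_{n+1}>(1+\varepsilon)a_n$ for infinitely many $n$, which is the conclusion.

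No real obstacle is anticipated; the only mild subtlety is to justify the last step via $\liminf\geqslant 1$ so that the non-existence of the limit is automatically absorbed into the $\limsup>1$ case. The entire argument is a short chain of equivalences built on top of Proposition \ref{p2}.
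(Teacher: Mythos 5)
Your proof is correct and matches the approach the paper implicitly uses (the paper states the corollary immediately after Proposition \ref{p2} with no separate proof, treating it as a direct consequence). You correctly identify that ``obtained from the WGAA'' reduces to condition c) by taking $\Lambda=\{n: b_n\leqslant ta_n\}$, apply Proposition \ref{p2} to swap the negation of i) for the negation of ii), and then use that $(a_n)$ is non-decreasing (so $\liminf a_{n+1}/a_n\geqslant 1$) to convert $\lim a_{n+1}/a_n\neq 1$ into the quantitative statement $\limsup a_{n+1}/a_n>1$, i.e., $a_{n+1}>(1+\varepsilon)a_n$ infinitely often for some $\varepsilon>0$.
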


\begin{proof}[Proof of Proposition \ref{p2}]
i) $\Longrightarrow$ ii): Since $(a_n)$ is non-decreasing, it suffices to show that for all $\varepsilon > 0$, there exists $N$ such that $a_{n+1}/a_n < 1+\varepsilon$ for all $n > N$. Choose $M$ sufficiently large such that $M/(M-1) < 1+\varepsilon/2$. By i), there exists $N$ such that for all $n>N$, $b_n > Ma_n$ and $1/a_n < \varepsilon/2$.
By \eqref{e8}, 
$$
\frac{1}{a_{n+1}-1}\ >\ \frac{1}{a_n} - \frac{1}{b_n} \ >\ \frac{1}{a_n} - \frac{1}{Ma_n}\ =\ \frac{M-1}{M}\frac{1}{a_n}, \forall n > N,
$$
which gives
$$\frac{a_{n+1}}{a_n} \ <\ \frac{M}{M-1} + \frac{1}{a_n}\ <\ 1+\varepsilon, \forall n > N.$$

ii) $\Longrightarrow$ i): We prove by contrapositive. Choose $t\geqslant 1$ and suppose that $b_n/a_n\leqslant t$ infinitely often. Let $A$ be the infinite set $\left\{n: b_n/a_n \leqslant t\right\}$. By \eqref{e8}, we have
$$\frac{1}{a_{n+1}} \ <\ \frac{1}{a_n-1}-\frac{1}{b_n}\ \leqslant\ \frac{1}{a_n-1} - \frac{1}{ta_n}, \forall n\in A.$$
Trivial calculations give
$$\frac{a_{n+1}}{a_n}\ >\ \frac{t(a_n-1)}{ta_n - (a_n-1)}\ =\ \frac{a_n-1}{(a_n-1) - \frac{a_n-1}{t}+1}\ =\ \frac{1}{1-\frac{1}{t} + \frac{1}{a_n-1}}, \forall n\in A.$$
If $t = 1$, then $a_{n+1}/a_n > a_n-1$ for all $n\in A$. That $a_n \rightarrow\infty$ implies that $a_{n+1}/a_n \geqslant 2$ infinitely often, making ii) fail. If $t > 1$, choose $N$ sufficiently large such that for $n>N$, $a_n > 2t+1$. Then for all $n\in A$ and $n > N$,
$$\frac{a_{n+1}}{a_n}\ >\ \frac{1}{1-\frac{1}{t}+\frac{1}{2t}}\ =\ \frac{1}{1-\frac{1}{2t}},$$
which contradicts ii).
\end{proof}

\begin{rek}\normalfont
If we replace the hypothesis ``$\sum_{n=1}^\infty 1/b_n = \theta$" in Proposition \ref{p2} by ``$\sum_{n=1}^\infty 1/b_n < \theta$", both i) and ii) in Proposition \ref{p2} hold. Indeed, if $\theta-\sum_{n=1}^\infty 1/b_n =: c > 0$, then
$$a_n\ :=\ G\left(\theta-\sum_{i=1}^{n-1} \frac{1}{b_i}\right)\ \leqslant\ G(c),$$
so $(a_n)$ is bounded. 
\end{rek}

We state and prove the last result in this section.

\begin{thm}\label{m1}
Let $(a_n)_{n=1}^\infty\subset\mathbb{N}$ be non-decreasing such that $a_1\geqslant 2$ and $a_n\rightarrow\infty$. There exist $\theta\in (0,1)$ and $(b_n)_{n=1}^\infty$ such that $$\sum_{n=1}^\infty \frac{1}{b_n} \ =\ \theta,$$ 
and for every $n\geqslant 1$, $$a_n \ =\ G\left(\theta-\sum_{i=1}^{n-1}\frac{1}{b_i}\right).$$
\end{thm}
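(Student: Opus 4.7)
My plan is to construct $\theta$ and $(b_n)$ simultaneously by induction on $n$. After choosing integers $b_1,\dots,b_n$, set $S_n := \sum_{i=1}^n 1/b_i$ and define the admissible $\theta$-interval
\[
I_n \;:=\; \Bigl\{\theta \in (0,1) : G\bigl(\theta - \textstyle\sum_{i<k} 1/b_i\bigr) = a_k \text{ for all } 1 \leqslant k \leqslant n+1\Bigr\}.
\]
Equivalently, writing $T_n := I_n - S_n$, the admissible remainder $r_{n+1} := \theta - S_n$ ranges over a half-open sub-interval $T_n$ of $(1/a_{n+1}, 1/(a_{n+1}-1)]$. The base case is $T_0 = (1/a_1, 1/(a_1-1)]$, which is non-empty (and can be shrunk to keep $\theta < 1$ when $a_1 = 2$).

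For the inductive step, given $T_n = (x_n, y_n]$ non-empty, a direct calculation shows that $T_{n+1} = (T_n - 1/b_{n+1}) \cap (1/a_{n+2}, 1/(a_{n+2}-1)]$ is non-empty exactly when $1/b_{n+1}$ lies in the open interval $J_n := (x_n - 1/(a_{n+2}-1),\, y_n - 1/a_{n+2})$, of length $(y_n - x_n) + 1/(a_{n+2}(a_{n+2}-1))$. I would then split into two cases. When $a_{n+2} = a_{n+1}$, the lower endpoint of $J_n$ is non-positive and any sufficiently large integer $b_{n+1}$ works; I would additionally pick $b_{n+1}$ so that the left endpoint of $T_{n+1}$ is forced down to $1/a_{n+1} = 1/a_{n+2}$, maintaining the invariant $x_{n+1} = 1/a_{n+2}$. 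When $a_{n+2} > a_{n+1}$, both endpoints of $J_n$ are positive; the delicate task is to locate an integer $b_{n+1}$ with $1/b_{n+1} \in J_n$. The invariant $x_n = 1/a_{n+1}$ makes $J_n$ as wide as possible given $y_n$, and a case analysis on the relative sizes of $a_{n+1}, a_{n+2}$ (distinguishing $a_{n+2} = a_{n+1}+1$, where any large $b_{n+1}$ suffices, from $a_{n+2} > a_{n+1}+1$, where a careful choice of $b_{n+1}$ near $a_{n+1}(a_{n+2}-1)/(a_{n+2}-1-a_{n+1})$ is needed) should produce a valid integer $b_{n+1}$.

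Once $(b_n)$ has been constructed, take $\theta$ to be a point in $\bigcap_n I_n$. Since $|I_n| = |T_n| \leqslant 1/(a_{n+1}(a_{n+1}-1)) \to 0$ as $a_n \to \infty$, this intersection contains at most one point. I would arrange the construction so that the upper endpoints $\beta_n := \sup I_n$ decrease to a limit $\beta^*$ which belongs to every $I_n$ (for instance, by ensuring $\beta_n$ eventually stabilizes or by showing $\beta^*$ satisfies every strict-inequality constraint). Finally, $r_n \in (1/a_n, 1/(a_n-1)]$ combined with $a_n \to \infty$ forces $r_n \to 0$, hence $\sum_{n=1}^\infty 1/b_n = \theta$.

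The main obstacle is the jump case, where the length of $J_n$ may be as small as $1/(a_{n+2}(a_{n+2}-1))$ while gaps between consecutive unit fractions near the required scale are of comparable size. Overcoming this will likely require either a stronger inductive invariant that guarantees sufficient width of $J_n$ at each jump, or an adaptive choice of $b_{n+1}$ at jumps that depends on the precise arithmetic of $a_{n+1}$ and $a_{n+2}$; the bulk of the technical work will lie in this case.
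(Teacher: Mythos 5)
Your plan and the paper's argument are genuinely different: you try to maintain a nested family of admissible $\theta$-intervals and pick each $b_{n+1}$ to keep the next interval non-empty, whereas the paper constructs $(b_n)$ directly (first at the jump indices, then at the flat stretches) and takes $\theta := \sum 1/b_n$ at the end, verifying the greedy condition by a telescoping estimate. The obstacle you flag at the jump step is real and is precisely what your plan, as written, does not surmount. With your invariant $x_n = 1/a_{n+1}$, the interval of admissible $b_{n+1}$ at a jump from $p := a_{n+1}$ to $q := a_{n+2}$, in the worst case $y_n \approx x_n$, has width roughly $p^2/\bigl((q-p)(q-1-p)\bigr)$, which tends to $0$ as $q/p \to \infty$; no integer $b_{n+1}$ need exist. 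Your suggested target $b_{n+1} \approx p(q-1)/(q-1-p)$ is equivalent to $1/b_{n+1} \approx 1/p - 1/(q-1)$, which is \emph{below} the lower bound the paper uses and pushes the remainder to the closed endpoint $1/(q-1)$, collapsing the admissible interval.

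The missing idea is the paper's strengthened constraint (its \eqref{e17}): require
\[
\frac{1}{a_{n}} - \frac{1}{a_{n+1}} \;<\; \frac{1}{b_{n}} \;<\; \frac{1}{a_{n}-1} - \frac{1}{a_{n+1}-1}
\]
at each jump. Unlike the ``natural'' constraint $\bigl(1/a_n - 1/(a_{n+1}-1),\ 1/(a_n-1) - 1/a_{n+1}\bigr)$, this interval is strictly smaller (so it is a genuine design choice, not a formal consequence), yet the corresponding interval of integers $b_n$ has length $1 + (2a_n-1)/(a_{n+1}-a_n) > 1$ for \emph{every} pair $a_n < a_{n+1}$, so an integer always exists regardless of how large the jump is. Moreover this choice is exactly tuned so that $\sum_{i\geqslant j} 1/b_{n_i}$ telescopes strictly between $1/a_{n_j}$ and $1/(a_{n_j}-1)$, which keeps the remainder bounded \emph{strictly} away from both endpoints — precisely the slack your nested-interval scheme lacks. (The flat stretches are then handled by inserting very large terms whose total contribution is dominated by that slack.) In your language, the stronger inductive invariant you anticipate needing is: at every jump index, the remainder interval is the \emph{full} open interval $(1/a_{n+1}, 1/(a_{n+1}-1))$, which \eqref{e17} preserves. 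Without identifying \eqref{e17} (or an equivalent device), the jump case is a genuine gap and the proof is incomplete.
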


\begin{proof}
Since $a_n\rightarrow \infty$, we can form the infinite set $A\subset\mathbb{N}$ such that $n\in A$ if and only if $a_{n+1}-a_n\geqslant 1$. In other words, $A$ contains all the indices immediately before a jump in $(a_n)$. Write $A = \{n_1, n_2, n_3, \ldots\}$, where $n_1 < n_2 < n_3 < \cdots$. Note that $a_{n_j} < a_{n_{j+1}}$ for all $j$. We obtain the sequence $(b_n)$ by first constructing all the $b_{n}$ for $n\in A$ then constructing the rest.  

Step 1: for each $j\geqslant 1$, choose $b_{n_j}$ such that
\begin{equation}\label{e11}\frac{a_{n_j}a_{n_{j+1}}}{a_{n_{j+1}}-a_{n_j}}-1-\frac{2a_{n_j}-1}{a_{n_{j+1}}-a_{n_j}}\ <\ b_{n_j}\ <\ \frac{a_{n_j}a_{n_{j+1}}}{a_{n_{j+1}}-a_{n_j}},\end{equation}
which can be done since the distance between the two ends are greater than $1$. Note that \eqref{e11} is equivalent to
\begin{equation}\label{e17}\frac{1}{a_{n_j}} - \frac{1}{a_{n_{j+1}}}\ <\ \frac{1}{b_{n_j}}\ <\ \frac{1}{a_{n_j}-1}-\frac{1}{a_{n_{j+1}}-1}.\end{equation}
It follows that for each $j\geqslant 1$, 
\begin{equation}\label{e12}\frac{1}{a_{n_j}}\ <\ \sum_{i=j}^\infty\frac{1}{b_{n_i}}\ <\ \frac{1}{a_{n_j}-1}.\end{equation}

Step 2: Due to \eqref{e12}, we can choose a sequence of positive  numbers $(\theta_j)_{j=1}^\infty$ satisfying
$$\frac{1}{a_{n_j}}\ <\ \sum_{i=j}^\infty\frac{1}{b_{n_i}} + \theta_j\ <\ \frac{1}{a_{n_j}-1}.$$
Let $n_0 = 0$. For each $j\geqslant 1$, set $b_{n_{j-1}+1}= b_{n_{j-1}+2}= \cdots = b_{n_j-1} = N_j$, where $N_j$ is sufficiently large such that
$$\frac{n_j-n_{j-1}-1}{N_j}\ <\ \min\left\{\frac{\theta_1}{2^j}, \frac{\theta_2}{2^{j-1}}, \ldots, \frac{\theta_j}{2}\right\}.$$

Step 3: Set $\theta := \sum_{n=1}^\infty \frac{1}{b_n}$. We claim that $\theta\in (0,1)$. We have
\begin{align*}\sum_{n=1}^\infty\frac{1}{b_n}&\ =\ \sum_{j=1}^\infty\frac{1}{b_{n_j}} + \sum_{j=1}^\infty\sum_{i=n_{j-1}+1}^{n_j-1}\frac{1}{b_i}\\
&\ =\ \sum_{j=1}^\infty\frac{1}{b_{n_j}} + \sum_{j=1}^\infty \frac{n_j-n_{j-1}-1}{N_j}\\
&\ <\ \sum_{j=1}^\infty\frac{1}{b_{n_j}} + \sum_{j=1}^\infty \frac{\theta_1}{2^j} \\
& \ =\ \sum_{j=1}^\infty\frac{1}{b_{n_j}} + \theta_1\ <\ \frac{1}{a_{n_1}-1}\ \leqslant\ 1.
\end{align*}

Step 4: Finally, we need to verify that 
$$\frac{1}{a_n} \ <\ \sum_{i=n}^\infty \frac{1}{b_i}\ \leqslant\ \frac{1}{a_n-1},\forall n\geqslant 1.$$
Fix $n\geqslant 1$ and choose $j$ such that $n_{j-1} < n\leqslant n_j$. By \eqref{e12}, we have
$$\sum_{i=n}^\infty \frac{1}{b_i} \ \geqslant\ \sum_{i=n_j}^\infty \frac{1}{b_i}\ \geqslant\ \sum_{i=j}^\infty \frac{1}{b_{n_i}}\ >\ \frac{1}{a_{n_j}}\ =\ \frac{1}{a_n}.$$
On the other hand,
\begin{align*}
\sum_{i=n}^\infty \frac{1}{b_i}&\ \leqslant\ \sum_{i=j}^\infty \frac{1}{b_{n_i}} + \sum_{i=j}^\infty\sum_{n_{i-1}+1}^{n_i-1}\frac{1}{b_n}\\
&\ =\ \sum_{i=j}^\infty \frac{1}{b_{n_i}} + \sum_{i=j}^\infty\frac{n_i-n_{i-1}-1}{N_i}\\
&\ <\ \sum_{i=j}^\infty \frac{1}{b_{n_i}} + \sum_{i=j}^\infty \frac{\theta_j}{2^{i+1-j}}\\
&\ =\ \sum_{i=j}^\infty \frac{1}{b_{n_i}} + \theta_j\ <\ \frac{1}{a_{n_j}-1}\ =\ \frac{1}{a_n-1}.
\end{align*}
This completes our proof.
\end{proof}

\begin{cor}\label{c2}
Let $(a_n)_{n=1}^\infty\subset\mathbb{N}$ be non-decreasing with $a_1\geqslant 2$ and $a_n\rightarrow\infty$. Then $\lim_{n\rightarrow\infty} a_{n+1}/a_n \neq 1$ is equivalent to the existence of $\theta\in (0,1)$ and $(b_n)_{n=1}^\infty$ such that $(a_n)_{n=1}^\infty$ and $(b_n)_{n=1}^\infty$ are the sequences obtained from the WGAA applied to $\theta$. 
\end{cor}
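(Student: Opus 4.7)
My plan is to derive Corollary \ref{c2} as a direct synthesis of Theorem \ref{m1} and Proposition \ref{p2}. Theorem \ref{m1} supplies, for any admissible $(a_n)$, some $\theta$ and $(b_n)$ fulfilling conditions a) and b) from the introduction, while Proposition \ref{p2} links the distribution of the ratios $b_n/a_n$ (condition c)) with the asymptotic behavior of $a_{n+1}/a_n$. These two results mesh cleanly into the desired equivalence, so the work is to chain them together and verify a small matter of admissibility at the end.

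For the forward implication, I would assume that $(a_n)$ and $(b_n)$ come from the $(t,\Lambda)$-WGAA applied to some $\theta$. By the very definition of the algorithm, $b_n \leqslant t a_n$ for every $n\in\Lambda$, and $\Lambda$ is infinite; hence the set $\{n : b_n/a_n \leqslant t\}$ is infinite. Proposition \ref{p5} confirms that $(b_n)$ is a weak greedy approximation of $\theta$, so Proposition \ref{p2} applies. Taking the contrapositive of i) $\Rightarrow$ ii) in that proposition yields $\lim_{n\rightarrow\infty} a_{n+1}/a_n \neq 1$.

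For the backward implication, I would assume $\lim_{n\rightarrow\infty} a_{n+1}/a_n \neq 1$ and invoke Theorem \ref{m1} to obtain $\theta\in(0,1)$ and a sequence $(b_n)$ satisfying a) and b). Applying the contrapositive of ii) $\Rightarrow$ i) in Proposition \ref{p2} to this $(b_n)$ gives a constant $t\geqslant 1$ such that $\Lambda := \{n : b_n/a_n \leqslant t\}$ is infinite. It then remains to verify that running the $(t,\Lambda)$-WGAA on $\theta$ and selecting at step $n$ the value $b_n$ already constructed reproduces both sequences. The constraint $b_n \geqslant a_n$ follows from the weak greedy property \eqref{e6}, while $b_n \leqslant t a_n$ for $n\in\Lambda$ is precisely the defining property of $\Lambda$, so every such choice is admissible.

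The only subtle point I expect to spell out is that ``$\lim \neq 1$'' really matches the negation of condition ii) in Proposition \ref{p2}. Since $(a_n)$ is non-decreasing with $a_n\rightarrow\infty$, every ratio $a_{n+1}/a_n$ is at least $1$, so failing to converge to $1$ is equivalent to $\limsup_{n\rightarrow\infty} a_{n+1}/a_n > 1$, which is precisely the negation of ii). With this observation in place, each of the two implications collapses into a short application of the cited results, and I do not foresee any technical obstacle beyond this bookkeeping.
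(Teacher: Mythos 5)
Your proposal is correct and takes essentially the same approach as the paper, which simply cites Proposition \ref{p2} and Theorem \ref{m1}; you fill in the details the paper leaves implicit, including the worthwhile admissibility check that the $(b_n)$ produced by Theorem \ref{m1} can indeed be realized as a run of the $(t,\Lambda)$-WGAA. One harmless slip: in each direction you invoke the ``contrapositive of i) $\Rightarrow$ ii)'' and ``contrapositive of ii) $\Rightarrow$ i)'' with the labels swapped (to pass from not-i) to not-ii) you want the contrapositive of ii) $\Rightarrow$ i), and vice versa), but since Proposition \ref{p2} is a biconditional this does not affect the argument.
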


\begin{proof}
Use Proposition \ref{p2} and Theorem \ref{m1}.
\end{proof}

\begin{rek}\normalfont
Observe that \eqref{e17} is stronger than \eqref{e8}. This observation is important in studying the uniqueness of $\theta$ and $(b_n)$ in the next section. 
\end{rek}

\section{Uniqueness of $\theta$ and $(b_n)$}\label{unique}
Thanks to Theorem \ref{m1}, we know the existence of $\theta$ and $(b_n)$ given any non-decreasing sequence $(a_n)$ with $a_1\geqslant 2$ and $a_n\rightarrow\infty$. We now give sufficient and necessary conditions for when $(a_n)$ determines $\theta$ and $(b_n)$ uniquely. By Step 2 in the proof of Theorem \ref{m1}, a necessary condition is that $(a_n)$ must be strictly increasing. We can then eliminate Step 2 in constructing the sequence $(b_n)$ because $A =\mathbb{N}$. We claim further that $a_{n+1}-a_n \geqslant 2$ for all $n\in \mathbb{N}$. Indeed, suppose $a_{N+1}-a_N = 1$ for some $N$. We rewrite \eqref{e11} as
\begin{equation}\label{e16}a_{N+1}a_N-2a_N\ \leqslant\ b_N\ \leqslant\ a_Na_{N+1}.\end{equation}
There are at least $2a_N+1$ choices of $b_N$, so $\theta$ and $(b_n)$ are not unique. (Note that we allow equalities in \eqref{e16} because the construction in the proof of Theorem \ref{m1} still works if we allow equalities in finitely many \eqref{e11}.) 

Moreover, $(b_n)$ must satisfy \eqref{e8}. The following proposition tells us precisely when \eqref{e8} determines $(b_n)$ unequivocally. 

\begin{prop}\label{p3}
Let $(a_n)_{n=1}^\infty$ be non-decreasing such that $a_1\geqslant 2$ and $a_n\rightarrow\infty$. Then $(b_n)_{n=1}^\infty$ is uniquely determined by \eqref{e8} if and only if
$$a_{n+1}-2\ \geqslant\ a_n\ \geqslant\ 2, \forall n\geqslant 1,$$ and for each $n$, 
one of the following holds
\begin{enumerate}
\item[i)] $a_{n+1}-a_n-1$ divides $a_n^2$, and 
$$a_{n+1} \ \geqslant\ \frac{\sqrt{3}}{2}\sqrt{4a_n^2-4a_n+3}+2a_n-\frac{1}{2};$$ 
\item[ii)] $a_{n+1}-a_n-1$ does not divide $a_n^2$, and 
$$
\left\lfloor\frac{a_n^2}{a_{n+1}-a_n-1}\right\rfloor\ \leqslant\ \frac{(a_n-1)^2}{a_{n+1}-a_n+1}.
$$
\end{enumerate}
\end{prop}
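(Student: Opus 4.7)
The plan is to read \eqref{e8} as an open-interval constraint $L_n < b_n < U_n$ on the integer $b_n$, and then decide when this interval contains exactly one positive integer. Inverting \eqref{e8} and applying partial-fraction manipulation, with $a := a_n$ and $d := a_{n+1} - a_n$, shows that \eqref{e8} is equivalent to
$$L_n \ :=\ (a-1) + \frac{(a-1)^2}{d+1}\ <\ b_n\ <\ a + \frac{a^2}{d-1}\ =:\ U_n.$$
The content of the proposition is precisely the arithmetic condition for this open interval to contain exactly one integer.

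I would first dispense with the necessity of $d\geqslant 2$ (equivalently $a_{n+1}-2\geqslant a_n$). The case $d=0$ is already ruled out by the strict-increase discussion preceding the proposition; when $d=1$ the left side of \eqref{e8} collapses to $0<1/b_n$, which places no lower bound on $b_n$ and so admits infinitely many solutions. Granted $d\geqslant 2$, a direct computation yields
$$U_n - L_n\ =\ 1 + \frac{2a^2+2ad-2a-d+1}{d^2-1}\ >\ 1,$$
so $(L_n,U_n)$ always contains at least one integer; the task is to decide when it contains no more than one.

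Next I would split into cases according to whether $(d-1)\mid a^2$. In Case (i), $U_n\in\mathbb{Z}$, so the largest integer strictly below $U_n$ is $U_n-1$, and uniqueness reduces to $L_n\geqslant U_n-2$, i.e.\ $\tfrac{(a-1)^2}{d+1}+1\geqslant \tfrac{a^2}{d-1}$. Clearing denominators gives the quadratic in $d$
$$d^2 - (2a-1)\,d - (2a^2-2a+2)\ \geqslant\ 0,$$
with discriminant $12a^2-12a+9=3(4a^2-4a+3)$; taking the larger root and substituting $a_{n+1}=a+d$ yields exactly the closed-form bound in (i). In Case (ii), $U_n\notin\mathbb{Z}$, so the only candidate integer in $(L_n,U_n)$ is $\lfloor U_n\rfloor = a_n + \lfloor a_n^2/(d-1)\rfloor$; uniqueness is precisely the statement that the next integer $\lfloor U_n\rfloor-1$ lies at or below $L_n$, which after subtracting $a_n$ is the stated $\lfloor a_n^2/(d-1)\rfloor\leqslant (a_n-1)^2/(d+1)$.

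The main obstacle is the Case (i) algebra: passing cleanly from $L_n\geqslant U_n-2$ to the closed-form lower bound on $a_{n+1}$ via the quadratic formula and the identity $12a^2-12a+9=3(4a^2-4a+3)$. A secondary bookkeeping task is verifying that both sub-cases of Case (ii) (whether or not $L_n$ is itself an integer) are absorbed into the single inequality stated there; this works because when $L_n$ is an integer, $\lfloor U_n\rfloor\leqslant L_n+1$ combined with the automatic $\lfloor U_n\rfloor>L_n$ (from $U_n-L_n>1$) forces $\lfloor U_n\rfloor = L_n+1$, which gives exactly one integer in $(L_n,U_n)$.
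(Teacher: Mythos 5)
Your proof is correct and takes essentially the same route as the paper: translate \eqref{e8} into the open interval $\bigl(\tfrac{(a_n-1)a_{n+1}}{a_{n+1}-a_n+1},\,\tfrac{a_n(a_{n+1}-1)}{a_{n+1}-a_n-1}\bigr)$, note its length exceeds $1$, and characterize uniqueness by asking whether the second-largest candidate integer falls at or below the left endpoint, splitting on whether the right endpoint is an integer. The substitution $a=a_n$, $d=a_{n+1}-a_n$ cleans up the algebra a bit relative to the paper's computation, and your explicit dispatch of the $d\leqslant 1$ cases and of the sub-case where $L_n$ is itself an integer supplies bookkeeping the paper leaves implicit, but the underlying argument is the same.
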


\begin{proof}[Proof of Theorem \ref{m1}]
By \eqref{e8}, $(b_n)$ is uniquely determined if and only if each of the intervals 
$$I_n\ :=\ \left(\frac{(a_n-1)a_{n+1}}{a_{n+1}-a_n+1}, \frac{a_n(a_{n+1}-1)}{a_{n+1}-a_n-1}\right)$$
contains exactly one positive integer. It is easy to verify that there always exists one largest integer in $I_n$, called $k_n$. In order that $I_n$ contains no other integers, we need
\begin{equation}\label{e13}k_n - \frac{(a_n-1)a_{n+1}}{a_{n+1}-a_n+1}\ \leqslant\ 1.\end{equation}
We obtain a formula for $k_n$ depending on whether $a_n(a_{n+1}-1)/(a_{n+1}-a_n-1)$ is an integer or not. 

Case 1: if $a_n(a_{n+1}-1)/(a_{n+1}-a_n-1)\in \mathbb{N}$, then 
$$k_n\ =\  \frac{a_n(a_{n+1}-1)}{a_{n+1}-a_n-1}-1.$$
Hence, \eqref{e13} is equivalent to
$$\frac{a_n(a_{n+1}-1)}{a_{n+1}-a_n-1} - \frac{(a_n-1)a_{n+1}}{a_{n+1}-a_n+1}\ \leqslant\ 2.$$
Equivalently,
$$a_{n+1}^2-(4a_n-1)a_{n+1}+(a_n^2+a_n-2)\ \geqslant\ 0,$$
giving
$$a_{n+1}\ \geqslant\ 2a_n+\frac{\sqrt{3}}{2}\sqrt{4a_n^2-4a_n+3}-\frac{1}{2}.$$

Case 2: if $a_n(a_{n+1}-1)/(a_{n+1}-a_n-1)\notin \mathbb{N}$, then
$$k_n\ =\ \left\lfloor\frac{a_n(a_{n+1}-1)}{a_{n+1}-a_n-1}\right\rfloor\ =\ a_n + \left\lfloor\frac{a_n^2}{a_{n+1}-a_n-1}\right\rfloor.$$
Hence, \eqref{e13} is equivalent to
$$\left(a_n + \left\lfloor\frac{a_n^2}{a_{n+1}-a_n-1}\right\rfloor\right)-\left(\frac{(a_n-1)^2}{a_{n+1}-a_n+1}+(a_n-1)\right)\ \leqslant\ 1,$$
giving
$$\left\lfloor\frac{a_n^2}{a_{n+1}-a_n-1}\right\rfloor\ \leqslant\ \frac{(a_n-1)^2}{a_{n+1}-a_n+1}.$$
\end{proof}

\begin{cor}[Sufficient condition for uniqueness]\label{c3}
Let $(a_n)_{n=1}^\infty$ be increasing with $a_1\geqslant 2$ and $a_n\rightarrow\infty$. If
\begin{enumerate}
\item[i)] $a_{n+1}-2\geqslant a_n\geqslant 2$ for all $n$, and 
\item[ii)] for each $n\geqslant 1$, one of the following holds
\begin{enumerate}
\item[a)] $a_{n+1}-a_n-1$ divides $a_n^2$, and 
$$a_{n+1} \ \geqslant\ \frac{\sqrt{3}}{2}\sqrt{4a_n^2-4a_n+3}+2a_n-\frac{1}{2};$$ 
\item[b)] $a_{n+1}-a_n-1$ does not divide $a_n^2$, and 
$$
\left\lfloor\frac{a_n^2}{a_{n+1}-a_n-1}\right\rfloor\ \leqslant\ \frac{(a_n-1)^2}{a_{n+1}-a_n+1},
$$
\end{enumerate}
\end{enumerate}
then there exist unique $\theta\in (0,1]$ and $(b_n)_{n=1}^{\infty}$ such that 
\begin{equation}\label{e14}\sum_{n=1}^\infty \frac{1}{b_n} \ =\ \theta,\end{equation}
and for every $n\geqslant 1$, \begin{equation}\label{e15}a_n \ =\ G\left(\theta-\sum_{i=1}^{n-1}\frac{1}{b_i}\right).\end{equation}
\end{cor}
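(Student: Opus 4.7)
The plan is to split Corollary~\ref{c3} into existence and uniqueness and reduce each part to a previously established result. For existence, I would simply appeal to Theorem~\ref{m1}: the hypothesis that $(a_n)$ is non-decreasing with $a_1\geqslant 2$ and $a_n\to\infty$ is exactly what that theorem needs, so a pair $(\theta,(b_n))$ with $\theta\in(0,1)$ satisfying \eqref{e14} and \eqref{e15} already exists. No construction needs to be redone here.

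The uniqueness step is the only new work, and it proceeds by showing that \eqref{e15} forces $(b_n)$ to satisfy the two-sided inequality \eqref{e8}. Concretely, given a pair $(\theta,(b_n))$ verifying \eqref{e15}, the definition of $G$ yields
$$
\frac{1}{a_n}\ <\ \theta-\sum_{i=1}^{n-1}\frac{1}{b_i}\ \leqslant\ \frac{1}{a_n-1}
$$
for every $n$. Subtracting $1/b_n$ and combining with the same inequality at index $n+1$ gives exactly
$$
\frac{1}{a_n}-\frac{1}{a_{n+1}-1}\ <\ \frac{1}{b_n}\ <\ \frac{1}{a_n-1}-\frac{1}{a_{n+1}},
$$
which is \eqref{e8}. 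Under hypotheses i) and ii) of the corollary, which are precisely the hypotheses of Proposition~\ref{p3}, that proposition tells us that there is at most one positive integer sequence $(b_n)$ satisfying \eqref{e8}. Hence $(b_n)$ is uniquely determined, and then $\theta=\sum_{n=1}^{\infty}1/b_n$ is uniquely determined as well.

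The argument is essentially a bookkeeping exercise: the substantial content has already been done in Theorem~\ref{m1} (producing a suitable $(b_n)$) and Proposition~\ref{p3} (singling out the conditions under which \eqref{e8} pins down a unique sequence). The only step that demands attention is the derivation of \eqref{e8} from \eqref{e15}; this is a routine two-line computation from the defining inequality for $G$, but one must be careful that strict versus weak inequalities match those appearing in \eqref{e8}. A minor cosmetic point is the small discrepancy in the ambient interval for $\theta$ — Theorem~\ref{m1} yields $\theta\in(0,1)$ while the corollary states $\theta\in(0,1]$ — but this is immaterial for the uniqueness conclusion, since the endpoint $\theta=1$ would itself correspond to an essentially trivial degenerate case.
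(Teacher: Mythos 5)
Your proposal is correct and follows essentially the same route as the paper: existence comes from Theorem~\ref{m1}, and uniqueness comes from observing that any pair $(\theta,(b_n))$ satisfying \eqref{e15} forces $(b_n)$ to obey \eqref{e8}, at which point Proposition~\ref{p3} pins down $(b_n)$ (and hence $\theta=\sum 1/b_n$) uniquely. The paper phrases the uniqueness step as a short contradiction argument rather than deriving \eqref{e8} explicitly, but the content is identical.
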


\begin{proof}
Theorem \ref{m1} guarantees the existence of $\theta$ and $(b_n)$. Suppose that there exists another pair $(\theta', (b_n'))$ different from $(\theta, (b_n))$. Then for some $N$, $b_N\neq b'_N$, both of which must verify \eqref{e8}. This contradicts Proposition \ref{p3}.
\end{proof}

Next, we establish a necessary condition for the uniqueness of $\theta$ and $(b_n)$ by requiring the inequalities
\begin{equation}\label{e18}\frac{a_{n}a_{n+1}}{a_{n+1}-a_{n}}-1-\frac{2a_{n}-1}{a_{n+1}-a_{n}}\ \leqslant\ b_{n}\ \leqslant\ \frac{a_{n}a_{n+1}}{a_{n+1}-a_{n}}\end{equation}
to determine exactly one solution $b_n$. Again, \eqref{e18} is slightly different from \eqref{e17} as we allow equalities, because the construction in the proof of Theorem \ref{m1} still works if equalities appear in finitely many \eqref{e11}.

\begin{prop}[Necessary condition for uniqueness]\label{p4}
Let $(a_n)_{n=1}^\infty$ be non-decreasing with $a_1\geqslant 2$ and $a_n\rightarrow\infty$. Suppose that there exist unique $\theta\in (0,1)$ and $(b_n)_{n=1}^\infty$ that satisfy \eqref{e14} and \eqref{e15}, then for all $n\geqslant 1$, we have
$$a_{n+1}\ \geqslant\ a_n+2,$$ 
$$(a_{n+1}-a_n) \mbox{ does not divide } a_na_{n+1},$$
and
\begin{equation}\label{e19}\left\lfloor \frac{a_n^2}{a_{n+1}-a_n}\right\rfloor\ <\ \frac{(a_n-1)^2}{a_{n+1}-a_n}.\end{equation}
\end{prop}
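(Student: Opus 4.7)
The plan is to show that uniqueness of $\theta$ and $(b_n)$ forces the closed interval appearing in \eqref{e18} to contain exactly one integer at every index $n$, and then to translate this single-integer requirement into the three stated arithmetic constraints. The key preliminary observation is that Step 1 of the proof of Theorem \ref{m1}, together with the allowance for equalities in finitely many \eqref{e11} pointed out in the remark after Proposition \ref{p3}, shows that any sequence $(b_n)$ with $b_n$ lying in the closed interval of \eqref{e18} can be completed to a valid pair $(\theta,(b_n))$ satisfying \eqref{e14} and \eqref{e15}. Therefore uniqueness of $(\theta,(b_n))$ is equivalent to each such interval containing a unique integer.

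I would first rewrite the endpoints of \eqref{e18} as
$$L \;=\; (a_n-1) + \frac{(a_n-1)^2}{a_{n+1}-a_n}, \qquad U \;=\; a_n + \frac{a_n^2}{a_{n+1}-a_n},$$
so that the length is $U - L = 1 + (2a_n-1)/(a_{n+1}-a_n) > 1$. The bound $a_{n+1} \geqslant a_n + 2$ then follows from the argument preceding Proposition \ref{p3}: the case $a_{n+1}-a_n = 1$ reduces \eqref{e18} to \eqref{e16}, whose interval contains $2a_n + 1 \geqslant 5$ integers; the strict increase of $(a_n)$ itself comes from the Step 2 reasoning in that same discussion.

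For the non-divisibility $(a_{n+1}-a_n) \nmid a_n a_{n+1}$, I would note that if $U$ were an integer then, because $U - L > 1$, the integer $U - 1$ would also lie in $[L,U]$, producing two admissible values of $b_n$. Once $U$ is known to be non-integral, the largest integer in $[L,U]$ is $\lfloor U\rfloor = a_n + \lfloor a_n^2/(a_{n+1}-a_n)\rfloor$, and the unique-integer requirement becomes $\lfloor U\rfloor - 1 < L$, equivalently $\lfloor U\rfloor < L + 1$. Substituting the formulas for $L$ and $U$ cancels the shared summand $a_n$ and yields precisely \eqref{e19}.

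I anticipate no serious obstacle in the arithmetic itself, which is essentially a careful count of lattice points in a short interval; the delicate point is the opening reduction, where one must verify that the freedom in the construction of Theorem \ref{m1} genuinely realizes every integer $b_n$ permitted by \eqref{e18} at all but finitely many $n$, so that two distinct integers in the interval really do correspond to two distinct pairs $(\theta,(b_n))$.
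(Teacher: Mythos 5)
Your proposal is correct and follows essentially the same route as the paper: reduce uniqueness to the requirement that the closed interval $[L_n,U_n]$ from \eqref{e18} contain exactly one integer, invoke the earlier discussion for $a_{n+1}\geqslant a_n+2$, rule out $(a_{n+1}-a_n)\mid a_na_{n+1}$ by observing that an integral right endpoint forces a second lattice point since $U_n-L_n>1$, and translate $\lfloor U_n\rfloor<L_n+1$ into \eqref{e19}. Your rewriting $L_n=(a_n-1)+\frac{(a_n-1)^2}{a_{n+1}-a_n}$ and $U_n=a_n+\frac{a_n^2}{a_{n+1}-a_n}$ is a clean cosmetic simplification of the same computation carried out in the paper.
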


\begin{proof} That $a_{n+1}\geqslant a_n+2$ is due to the discussion at the beginning of this section. 
We find a sufficient and necessary condition for \eqref{e18} to have exactly one solution $b_n$. 
If $a_{n+1}-a_n$ divides $a_na_{n+1}$, then 
$$I_n\ :=\ \left[\frac{a_{n}a_{n+1}}{a_{n+1}-a_{n}}-1-\frac{2a_{n}-1}{a_{n+1}-a_{n}}, \frac{a_{n}a_{n+1}}{a_{n+1}-a_{n}}\right]$$ contains at least two integers because
$$\frac{a_{n}a_{n+1}}{a_{n+1}-a_{n}} - \left(\frac{a_{n}a_{n+1}}{a_{n+1}-a_{n}}-1-\frac{2a_{n}-1}{a_{n+1}-a_{n}}\right)\ > \ 1.$$
If $a_{n+1}-a_n$ does not divide $a_na_{n+1}$, then 
the largest integer in $I_n$ is 
$$\left\lfloor\frac{a_{n}a_{n+1}}{a_{n+1}-a_{n}}\right\rfloor,$$
and $I_n$ contains exactly one integer if and only if 
$$\left\lfloor\frac{a_{n}a_{n+1}}{a_{n+1}-a_{n}}\right\rfloor - \left(\frac{a_{n}a_{n+1}}{a_{n+1}-a_{n}}-1-\frac{2a_{n}-1}{a_{n+1}-a_{n}}\right)\ <\ 1.$$
Equivalently, 
$$\left\lfloor \frac{a_n^2}{a_{n+1}-a_n}\right\rfloor\ <\ \frac{(a_n-1)^2}{a_{n+1}-a_n}.$$
This completes our proof. 
\end{proof}

\begin{cor}
Let $(a_n)_{n=1}^\infty$ be non-decreasing with $a_1\geqslant 2$ and $a_n\rightarrow\infty$. Suppose that there exist unique $\theta\in (0,1)$ and $(b_n)_{n=1}^\infty$ that satisfy \eqref{e14} and \eqref{e15}, then for all $n\geqslant 1$,
\begin{enumerate}
\item[i)] $a_{n+1}-a_n$ divides none of $(a_n-1)^2, a_n^2, a_na_{n+1}$;
\item[ii)] $3a_n< a_{n+1}$.
\end{enumerate}
\end{cor}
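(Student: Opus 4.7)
The plan is to unpack Proposition \ref{p4}, which supplies three ingredients about $d := a_{n+1}-a_n$: (1) $d\geqslant 2$, (2) $d\nmid a_na_{n+1}$, and (3) the floor inequality
$$\left\lfloor \frac{a_n^2}{d}\right\rfloor\ <\ \frac{(a_n-1)^2}{d}.$$
Both parts (i) and (ii) of the corollary should fall out of (2) and (3) by elementary manipulations of the division algorithm.

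For part (i) I would handle the three non-divisibility claims by pairing them up. First, since $a_na_{n+1}=a_n(a_n+d)=a_n^2+a_nd$, the statements ``$d\mid a_na_{n+1}$'' and ``$d\mid a_n^2$'' are equivalent, so ingredient (2) immediately gives both $d\nmid a_na_{n+1}$ and $d\nmid a_n^2$. For the third claim, suppose for contradiction that $d\mid (a_n-1)^2$. Then $(a_n-1)^2/d$ is a nonnegative integer, and because $(a_n-1)^2\leqslant a_n^2$ we have
$$\left\lfloor \frac{a_n^2}{d}\right\rfloor\ \geqslant\ \left\lfloor \frac{(a_n-1)^2}{d}\right\rfloor\ =\ \frac{(a_n-1)^2}{d},$$
which contradicts the strict inequality in (3). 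Hence $d$ divides none of $a_n^2$, $(a_n-1)^2$, $a_na_{n+1}$.

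For part (ii) I will squeeze out a quantitative lower bound for $d$ from (3). Apply the division algorithm and write $a_n^2=qd+r$ with $0\leqslant r<d$, so $\lfloor a_n^2/d\rfloor=q$; by part (i) we also know $r\geqslant 1$. Multiplying (3) by $d$ and using $(a_n-1)^2=a_n^2-(2a_n-1)=qd+r-(2a_n-1)$, the inequality becomes
$$qd\ <\ qd + r - (2a_n-1),$$
i.e.\ $r>2a_n-1$. Integrality of $r$ upgrades this to $r\geqslant 2a_n$, and then $d>r\geqslant 2a_n$ yields $a_{n+1}-a_n>2a_n$, which is precisely $3a_n<a_{n+1}$.

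There is no genuine obstacle here; the whole argument is a bookkeeping exercise on Proposition \ref{p4}. The only point that needs care is to keep the strict/non-strict inequalities consistent when passing between a real-number inequality and its integer floor, which is why I prefer to work directly with the remainder $r$ rather than with $\lfloor a_n^2/d\rfloor$.
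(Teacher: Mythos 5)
Your proof is correct and, while it rests on the same foundation (unpacking Proposition \ref{p4}), it streamlines both parts in ways that differ from the paper's argument. For part (i), the paper derives $d\nmid a_n^2$ directly from \eqref{e19} (if $d\mid a_n^2$ then $\lfloor a_n^2/d\rfloor = a_n^2/d > (a_n-1)^2/d$), and separately derives $d\nmid(a_n-1)^2$ by rewriting $a_n^2=(a_n-1)^2+2a_n-1$; you instead observe that $d\mid a_na_{n+1}$ and $d\mid a_n^2$ are equivalent because $a_na_{n+1}=a_n^2+a_nd$, collapsing two of the three claims into one. For part (ii), the paper first rewrites \eqref{e19} to get $(2a_n-1)/d<1$, hence $d\geqslant 2a_n$ and $a_{n+1}\geqslant 3a_n$, and then separately rules out the equality case $a_{n+1}=3a_n$ by another appeal to \eqref{e19} and part (i). Your remainder bookkeeping is cleaner: writing $a_n^2=qd+r$, \eqref{e19} gives $r>2a_n-1$, hence $r\geqslant 2a_n$ by integrality, and $d>r$ then yields $d\geqslant 2a_n+1$ in one stroke, so $a_{n+1}>3a_n$ without a separate equality case. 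The extra leverage comes from using the strict bound $r<d$ on the integer remainder rather than the generic real-number bound $\lfloor x\rfloor>x-1$. One tiny remark: the observation ``$r\geqslant 1$ by part (i)'' is true but never actually used in your argument and could be dropped.
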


\begin{proof}
i) By Proposition \ref{p4}, $a_{n+1}-a_n$ does not divide $a_na_{n+1}$. By \eqref{e19}, $a_{n+1}-a_n$ does not divide $a_n^2$. Also by \eqref{e19}, $a_{n+1}-a_n$ does not divide $(a_n-1)^2$. Indeed, supposing otherwise, we have
$$\left\lfloor \frac{a_n^2}{a_{n+1}-a_n}\right\rfloor\ =\ \left\lfloor\frac{(a_n-1)^2+2a_n-1}{a_{n+1}-a_n}\right\rfloor\ =\ \frac{(a_n-1)^2}{a_{n+1}-a_n} + \left\lfloor\frac{2a_n-1}{a_{n+1}-a_n}\right\rfloor,$$
contradicting \eqref{e19}.

ii) We write \eqref{e19} as
$$\left\lfloor \frac{a_n^2}{a_{n+1}-a_n}\right\rfloor\ <\ \frac{a_n^2}{a_{n+1}-a_n} - \frac{2a_n-1}{a_{n+1}-a_n}.$$
Hence, 
$$\frac{2a_n-1}{a_{n+1}-a_n}\ <\  1,$$
which gives $a_{n+1}\geqslant 3a_n$. However, $a_{n+1}$ cannot be $3a_n$. Otherwise, we obtain from \eqref{e19} that
$$\left\lfloor\frac{a_n^2}{2a_n}\right\rfloor\ <\ \frac{(a_n-1)^2}{2a_n}.$$
By i), $\lfloor a_n^2/(2a_n)\rfloor = (a_n-1)/2$. Hence,
$$\frac{a_n-1}{2}\ <\ \frac{(a_n-1)^2}{2a_n}\ \Longrightarrow\ a_n < 1,$$
a contradiction. 
\end{proof}

\section{Applications to particular sequences}\label{seq}

In this section, we look at sequences $(a_n)$ of special forms and find $(b_n)$ that satisfies $\eqref{e17}$. We use specific sequences in \cite{Sl23} as examples.

\subsection{Geometric progressions}
Let $a, r\in \mathbb{N}$ with $a\geqslant 2$ and $r\geqslant 2$. Let $(a_n)$ be the sequence
$$a, ar, ar^2, ar^3, \ldots.$$
By Corollary \ref{c2}, $(a_n)$ can be obtained from the WGAA applied to some $\theta$.

If $r-1$ divides $a$, we have the sequence $b_n =  ar^n/(r-1) - 1$ satisfy $\eqref{e17}$ and 
$$\theta \ =\ \sum_{n=1}^\infty \frac{1}{ar^n/(r-1)-1}.$$
For example, take $a = 2, r = 3$ to have 
$$\begin{cases}
a_n & = \quad 2\cdot 3^{n-1} \quad (\seqnum{A008776}),\\
b_n & = \quad 3^{n}-1 \quad (\seqnum{A024023}),\\
\theta& \approx \quad 0.68215 \text{ (irrational due to \cite{Er48})}.
\end{cases}$$

If $r-1$ does not divide $a$, we have the sequence $b_n = \lfloor ar^n/(r-1)\rfloor$ satisfy $\eqref{e17}$ and 
$$\theta \ =\ \sum_{n=1}^\infty \frac{1}{\lfloor ar^n/(r-1)\rfloor}.$$
For example, take $a = 2, r = 4$ to have
$$\begin{cases}
a_n & = \quad 2^{2n-1} \quad (\seqnum{A004171}),\\
b_n & = \quad \lfloor 2^{2n+1}/3\rfloor = 2(4^n-1)/3 \quad (\seqnum{A020988}),\\
\theta& \approx \quad 0.63165 \text{ (irrational due to \cite{Er48})}.
\end{cases}$$

\subsection{Arithmetic progressions}
Let $a, d\in \mathbb{N}$ with $a\geqslant 2$ and $d\geqslant 1$. Let $(a_n)$ be the sequence
$$a, a + d, a + 2d, a + 3d, \ldots.$$
By Corollary \ref{c2}, $(a_n)$ cannot be obtained from the WGAA applied to some $\theta$. 

If $d$ divides $a^2$, then 
$$b_n \ =\ \frac{(a+(n-1)d)(a+nd)}{d} - 1\ =\ \frac{a^2}{d}+(2n-1)a+n(n-1)d-1.$$
satisfies \eqref{e17} and 
$$\theta \ =\ \sum_{n=1}^\infty\left( \frac{a^2}{d}+(2n-1)a+n(n-1)d-1\right)^{-1}.$$
For example, take $a = 2$ and $d = 1$ to have 
$$\begin{cases}
a_n & = \quad n+1,\\
b_n & = \quad  n^2+3n+1\quad (\seqnum{A028387}),\\
\theta& = \quad \pi\tan\left(\frac{\sqrt{5}\pi}{2}\right)/\sqrt{5}\ \approx\ 0.54625.
\end{cases}$$

If $d$ does not divide $a^2$, then
$$b_n \ =\ \left\lfloor \frac{(a+(n-1)d)(a+nd)}{d}\right\rfloor$$
satisfies \eqref{e17} and
$$\theta \ =\ \sum_{n=1}^\infty \left\lfloor \frac{(a+(n-1)d)(a+nd)}{d}\right\rfloor^{-1}.$$
For example, take $a = 3$ and $d=2$ to have 
$$\begin{cases}
a_n & = \quad 2n+1,\\
b_n & = \quad  2n^2+4n+1\quad (\seqnum{A056220}),\\
\theta& = \quad \left(-2-\sqrt{2}\pi\cot\left(\frac{\pi}{\sqrt{2}}\right)\right)/4\ \approx\ 0.34551.
\end{cases}$$

\subsection{Fibonacci sequence}
The Fibonacci sequence is defined as $F_0 = 0$, $F_1 = 1$, and $F_{n} = F_{n-1} + F_{n-2}$ for $n\geqslant 2$. Define $a_n = F_{n+1}$ for $n\geqslant 1$. Then 
$$\frac{1}{a_n} - \frac{1}{a_{n+1}}\ =\ \frac{1}{F_{n+1}} - \frac{1}{F_{n+2}}\ =\ \frac{F_n}{F_{n+1}F_{n+2}}.$$
Using \eqref{e17}, we choose $b_1 = 3$ and 
for $n > 1$, choose 
\begin{align*}
    b_n &\ =\ \left\lfloor \frac{F_{n+1}F_{n+2}}{F_n}\right\rfloor\\
    &\ =\ \left\lfloor \frac{F_nF_{n+1}+F_{n-1}F_{n+1}+F_n^2+F_{n-1}F_n}{F_n}\right\rfloor\\
    &\ =\ \left\lfloor \frac{F_nF_{n+1}+F_n^2+(-1)^{n}+F_n^2+F_{n-1}F_n}{F_n}\right\rfloor \mbox{ by the Cassini's identity}\\
    &\ =\ \begin{cases}F_{n+3} - 1&\mbox{ if }n\mbox{ is odd},\\ F_{n+3}&\mbox{ if }n\mbox{ is even}.\end{cases}
\end{align*}

\ \\
\end{document}